\newcommand{\de}{\partial}
\newcommand{\vol}{\mathrm{Vol}}
\newcommand{\ve}{\varepsilon}
\renewcommand{\leq}{\leqslant}
\renewcommand{\geq}{\geqslant}
\numberwithin{equation}{section}
\begin{document}
\newtheorem{claim}{Claim}
\newtheorem{theorem}{Theorem}[section]
\newtheorem{lemma}[theorem]{Lemma}
\newtheorem{corollary}[theorem]{Corollary}
\newtheorem{proposition}[theorem]{Proposition}
\newtheorem{question}{question}[section]
\newtheorem{conjecture}[theorem]{Conjecture}

\theoremstyle{definition}
\newtheorem{remark}[theorem]{Remark}
\author{Valentino Tosatti}
\title[Zariski decompositions]{Orthogonality of divisorial Zariski decompositions for classes with volume zero}
\address{Department of Mathematics, Northwestern University, 2033 Sheridan Road, Evanston, IL 60208}
\email{tosatti@math.northwestern.edu}
\thanks{Supported in part by a Sloan Research Fellowship and NSF grant DMS-1308988. I am grateful to John Lesieutre for useful discussions and to Xiaokui Yang for helpful comments.}

\begin{abstract} We show that the orthogonality conjecture for divisorial Zariski decompositions on compact K\"ahler manifolds
holds for pseudoeffective $(1,1)$ classes with volume zero.
\end{abstract}
\maketitle

\section{Introduction}
The orthogonality conjecture of divisorial Zariski decompositions \cite{BDPP} states the following:
\begin{conjecture}\label{orto}
Let $(X^n,\omega)$ be a compact K\"ahler manifold, and $\alpha$ a pseudoeffective $(1,1)$ class. Then
\begin{equation}\label{ort}
\langle\alpha^{n-1}\rangle\cdot\alpha=\vol(\alpha).
\end{equation}
\end{conjecture}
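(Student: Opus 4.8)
The plan is to derive \eqref{ort} from the differentiability of the volume function, which is the standard route to orthogonality. Recall that $\vol$ is continuous on the pseudoeffective cone and homogeneous of degree $n$, and that the positive intersection product obeys the free inequality $\langle\alpha^{n-1}\rangle\cdot\alpha\geq\vol(\alpha)$: for big $\alpha$ and a Fujita-type approximation $\pi^*\alpha=\beta+[E]$ with $\beta$ nef and $E$ effective on a modification $\pi$, one has $\beta^{n-1}\cdot\pi^*\alpha=\beta^n+\beta^{n-1}\cdot[E]\geq\beta^n\to\vol(\alpha)$, and this extends to the whole cone by continuity of $\vol$ and of the positive product. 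Hence \eqref{ort} is equivalent to the reverse inequality, i.e. to the asymptotic orthogonality $\beta^{n-1}\cdot[E]\to0$ of the nef part against the exceptional part.

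If $\vol$ is differentiable at a big class $\alpha$ with $\tfrac{d}{dt}\big|_{t=0}\vol(\alpha+t\eta)=n\langle\alpha^{n-1}\rangle\cdot\eta$, then Euler's identity for the degree-$n$ homogeneous function $\vol$ gives $n\vol(\alpha)=\tfrac{d}{dt}\big|_{t=1}\vol(t\alpha)=n\langle\alpha^{n-1}\rangle\cdot\alpha$, which is exactly \eqref{ort}. The one-sided directional derivatives exist by concavity of $\vol^{1/n}$ (a Khovanskii--Teissier/Brunn--Minkowski inequality, available from the Monge--Amp\`ere theory of big classes), so the whole difficulty is again the orthogonality, which forces the two one-sided derivatives to agree. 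In the projective case \cite{BDPP} prove this by an approximate-Zariski decomposition together with a Morse-type inequality; transcendentally one would attempt the analogue via Demailly's regularization of currents and transcendental Morse inequalities on the modifications $\tilde X$.

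For the boundary case $\vol(\alpha)=0$ I would not pass through the big case, since the approximation $\alpha+\varepsilon\omega$ yields only an upper bound through the very big-case orthogonality one is trying to establish; instead I would argue directly with non-pluripolar products. Writing $T_{\min}=\omega_\alpha+\ddbar\varphi_{\min}$ for the current with minimal singularities in $\alpha$, one has $\langle\alpha^{k}\rangle=\{\langle T_{\min}^{k}\rangle\}$, so $\vol(\alpha)=\int_X\langle T_{\min}^n\rangle=0$ forces the positive measure $\langle T_{\min}^n\rangle\equiv0$. The cohomological pairing $\langle\alpha^{n-1}\rangle\cdot\alpha=\int_X\langle T_{\min}^{n-1}\rangle\wedge\omega_\alpha$ should then be bounded by this vanishing mass through monotonicity and comparison of non-pluripolar products, giving $\langle\alpha^{n-1}\rangle\cdot\alpha\leq0$ and hence $=0$ together with the free inequality.

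The main obstacle is the big case in the K\"ahler (non-projective) setting: the full differentiability of $\vol$, equivalently the asymptotic orthogonality $\beta^{n-1}\cdot[E]\to0$, is precisely the point where the projective argument does not transcribe. The value of the volume-zero case is that it bypasses this difficulty, since the relevant pairing is governed by the total Monge--Amp\`ere mass $\langle T_{\min}^n\rangle$, which vanishes by hypothesis, so the loss-of-mass phenomenon underlying orthogonality becomes vacuous. The remaining delicate point is justifying the comparison and integration-by-parts step for non-pluripolar products with the singular potential $\varphi_{\min}$, which I expect to be the technical heart of the argument.
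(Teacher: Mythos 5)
Your first two paragraphs are correct background (the free inequality $\langle\alpha^{n-1}\rangle\cdot\alpha\geq\vol(\alpha)$, the equivalence with differentiability of $\vol$, the reduction to $\lim_\delta\int_{X_\delta}\theta_\delta^{n-1}\wedge[E_\delta]=0$), but they prove nothing, and the entire burden falls on your third paragraph, which has a genuine gap. The step ``the cohomological pairing $\langle\alpha^{n-1}\rangle\cdot\alpha=\int_X\langle T_{min}^{n-1}\rangle\wedge\omega_\alpha$ should then be bounded by this vanishing mass through monotonicity and comparison of non-pluripolar products'' is not a deferred technicality: it \emph{is} the conjecture, restated. Two things go wrong. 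First, when $\vol(\alpha)=0$ the class $\alpha$ is pseudoeffective but not big, and for non-big classes the moving products are \emph{defined} (as in the paper) by $\langle\alpha^p\rangle=\lim_{\delta\downarrow 0}\langle(\alpha+\delta\omega)^p\rangle$, using minimal-singularity currents in the big classes $\alpha+\delta\omega$; the identification $\langle\alpha^p\rangle=\{\langle T_{min}^p\rangle\}$ with $T_{min}$ minimal in $\alpha$ itself is not known for $p<n$ (non-pluripolar products are not continuous up to the boundary of the big cone), so already your formula for the pairing is unjustified. Second, and more fundamentally, even granting that identification, the discrepancy between $\int_X\langle T_{min}^{n-1}\rangle\wedge\omega_\alpha$ (where $\omega_\alpha$ is a smooth, in general non-positive, representative) and $\int_X\langle T_{min}^{n}\rangle$ is exactly the failure of Stokes' theorem for the unbounded potential $\varphi_{min}$, i.e.\ the quantity $\mathcal{D}(\alpha)\geq 0$ of the paper; nothing about $\vol(\alpha)=0$ makes this failure ``vacuous''. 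Indeed, if the integration-by-parts/comparison you postpone were available, the identical chain of equalities applied to a big class would prove Conjecture \ref{orto} in full, which is open on K\"ahler manifolds (it is known only for $X$ projective \cite{WN}). The loss of mass you must exclude lives in the pairing $\langle\alpha^{n-1}\rangle\cdot\alpha$, not in $\langle\alpha^n\rangle$, and the vanishing of the latter gives no a priori control on the former.

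The paper closes this gap by a completely different mechanism, and it is worth seeing where the hypothesis $\vol(\alpha)=0$ actually enters. One works only on the resolutions, with $\mu_\delta^*(\alpha+\delta\omega)=\theta_\delta+[E_\delta]$, exactly as in your first paragraph, and the key input is the weak transcendental Morse-type inequality of \cite{To,Po2}: for nef classes $A,B$ on a compact K\"ahler manifold,
\begin{equation*}
\vol(A-B)\geq \frac{\left(\int A^n-n\int A^{n-1}\wedge B\right)^n}{\left(\int A^n\right)^{n-1}}.
\end{equation*}
Applying this with $A=\theta_\delta+t\mu_\delta^*(\alpha+\delta\omega+C_0\omega)$, $B=t(\theta_\delta+C_0\mu_\delta^*\omega)$, and optimizing $t$ proportionally to $\int_{X_\delta}\theta_\delta^{n-1}\wedge[E_\delta]$ as in \cite{BDPP}, one obtains
\begin{equation*}
\vol(\alpha+\delta\omega)\geq \frac{1}{C}\left(\int_{X_\delta}\theta_\delta^n+c\left(\int_{X_\delta}\theta_\delta^{n-1}\wedge[E_\delta]\right)^2\right)^n .
\end{equation*}
Since $\vol(\alpha+\delta\omega)\to\vol(\alpha)=0$ by continuity of the volume, this forces $\int_{X_\delta}\theta_\delta^{n-1}\wedge[E_\delta]\to 0$, which is \eqref{ort2}. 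Note the structure: this inequality is too weak to prove the conjecture for big classes (the denominator and the $n$-th power destroy the linear comparison one would need), but because its numerator has the correct form, total collapse of the volume kills the error term. That is the precise sense in which the volume-zero hypothesis is used, and it is used against the approximate Zariski decomposition, never against the non-pluripolar product of a minimal current in $\alpha$ itself. To rescue your approach you would have to prove the comparison statement you flagged as ``the technical heart'', and that statement is equivalent to the full conjecture, not to its volume-zero case.
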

Here $\vol(\alpha)$ denotes the volume of the class $\alpha$ \cite{Bo} and $\langle \cdot\rangle$ is the moving intersection product of classes as introduced by Boucksom \cite{BoT,BDPP,BEGZ,BFJ}.
The name of this problem comes from the following observation. If we choose an approximate Zariski decomposition of the class $\alpha$ (see section \ref{sectmain} for the precise details of the construction),
given by suitable modifications $\mu_\delta:X_\delta\to X$, for all small $\delta>0$, with $\mu_\delta^*(\alpha+\delta\omega)=\theta_\delta+[E_\delta]$ with $\theta_\delta$ a semipositive class and $E_\delta$ an effective $\mathbb{R}$-divisor, then \eqref{ort} is equivalent to
$$\lim_{\delta\downarrow 0}\int_{X_\delta}\theta_\delta^{n-1}\wedge[E_\delta]=0,$$
which explains the name. This conjecture was first raised by Nakamaye \cite[p.566]{Nak2} when $X$ is projective and $\alpha=c_1(L)$ for some line bundle $L$. This was solved by 
Boucksom-Demailly-P\u{a}un-Peternell \cite{BDPP} who more generally proved Conjecture \ref{orto} when $X$ is projective and $\alpha$ belongs to the real N\'eron-Severi group, and posed it in the general case.
It was observed in \cite{BDPP, BFJ} that Conjecture \ref{orto} is equivalent to several other powerful statements, including the fact that the dual cone of the pseudoeffective cone $\mathcal{E}$ is the movable cone $\mathcal{M}$, the weak transcendental Morse inequalities, and the $C^1$ differentiability of the volume function on the big cone. Very recently, Witt-Nystr\"om \cite{WN} has proved Conjecture \ref{orto} when $X$ is projective.

Our main result is the following:
\begin{theorem}\label{zero}
Conjecture \ref{orto} holds if $\vol(\alpha)=0$.
\end{theorem}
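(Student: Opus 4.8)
The plan is to use the approximate Zariski decompositions described in the introduction together with the orthogonality estimate of \cite{BDPP}, exploiting crucially that here the whole volume is being squeezed to zero. Since $\alpha$ is pseudoeffective, $\alpha+\delta\omega$ is big for every $\delta>0$, and for each such $\delta$ I fix a modification $\mu_\delta\colon X_\delta\to X$ with $\mu_\delta^*(\alpha+\delta\omega)=\theta_\delta+[E_\delta]$, where $\theta_\delta$ is nef and $E_\delta\geq 0$ is an effective $\mathbb{R}$-divisor. By the discussion in the introduction it then suffices to prove that
\[
\lim_{\delta\downarrow 0}\int_{X_\delta}\theta_\delta^{n-1}\wedge[E_\delta]=0,
\]
since this is equivalent to \eqref{ort} and would give $\langle\alpha^{n-1}\rangle\cdot\alpha=\vol(\alpha)=0$.

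First I would record two elementary facts. On the one hand $\int_{X_\delta}\theta_\delta^{n-1}\wedge[E_\delta]\geq 0$, because $\theta_\delta$ is nef and $E_\delta$ is effective. On the other hand, since $\theta_\delta\leq\mu_\delta^*(\alpha+\delta\omega)$ with $\theta_\delta$ nef, monotonicity of the volume gives $0\leq\int_{X_\delta}\theta_\delta^n=\vol(\theta_\delta)\leq\vol(\alpha+\delta\omega)$. Combined with the continuity of the volume function on the pseudoeffective cone, which yields $\vol(\alpha+\delta\omega)\to\vol(\alpha)=0$, this produces the crucial squeeze
\[
0\leq\vol(\alpha+\delta\omega)-\int_{X_\delta}\theta_\delta^n\leq\vol(\alpha+\delta\omega)\xrightarrow[\delta\downarrow 0]{}0 .
\]
Thus the defect $\vol(\alpha+\delta\omega)-\int_{X_\delta}\theta_\delta^n$ tends to zero automatically, \emph{for every} choice of approximate decomposition, without having to make it optimal.

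Next I would invoke the orthogonality estimate, valid on compact K\"ahler manifolds, in the form
\[
\left(\int_{X_\delta}\theta_\delta^{n-1}\wedge[E_\delta]\right)^2\leq\left(\int_{X_\delta}\theta_\delta^{n-1}\wedge\mu_\delta^*(\alpha+\delta\omega)\right)\left(\vol(\alpha+\delta\omega)-\int_{X_\delta}\theta_\delta^n\right).
\]
The second factor tends to $0$ by the previous step. For the first factor I would check that it stays bounded as $\delta\downarrow 0$: writing it as $(\mu_\delta)_*\theta_\delta^{n-1}\cdot(\alpha+\delta\omega)$ and using $(\mu_\delta)_*\theta_\delta^{n-1}\leq\langle(\alpha+\omega)^{n-1}\rangle$ in the movable cone (by monotonicity of positive products for $\delta\leq 1$), it is at most $\langle(\alpha+\omega)^{n-1}\rangle\cdot(\alpha+\omega)$, a fixed finite number. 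Feeding these two facts into the inequality forces $\int_{X_\delta}\theta_\delta^{n-1}\wedge[E_\delta]\to 0$. Alternatively, since the first factor equals $\int_{X_\delta}\theta_\delta^n+\int_{X_\delta}\theta_\delta^{n-1}\wedge[E_\delta]$, one may solve the resulting quadratic inequality for the defect directly and conclude with no a priori bound.

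The main obstacle is the orthogonality estimate itself, a Hodge-index--type inequality; what makes the general Conjecture \ref{orto} hard transcendentally is the need to drive $\vol(\alpha+\delta\omega)-\int_{X_\delta}\theta_\delta^n$ to zero by refining the decompositions while simultaneously keeping the leading factor under control, a balance currently out of reach for arbitrary K\"ahler classes. The entire point of the hypothesis $\vol(\alpha)=0$ is that it dissolves this difficulty: the controlling difference vanishes for free, because both $\vol(\alpha+\delta\omega)$ and $\int_{X_\delta}\theta_\delta^n$ are trapped in $[0,\vol(\alpha+\delta\omega)]$, so no optimization of the Zariski decompositions is needed and the estimate applies with a uniformly bounded constant.
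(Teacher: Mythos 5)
Your overall strategy---reduce to $\int_{X_\delta}\theta_\delta^{n-1}\wedge[E_\delta]\to 0$, observe that $0\leq\int_{X_\delta}\theta_\delta^n\leq\vol(\alpha+\delta\omega)\to 0$, and feed this squeeze into a quantitative orthogonality estimate---has the right shape, and the claim that the ``defect'' $\vol(\alpha+\delta\omega)-\int_{X_\delta}\theta_\delta^n$ tends to zero for free is correct. But there is a genuine gap at the central step: the inequality
\[
\left(\int_{X_\delta}\theta_\delta^{n-1}\wedge[E_\delta]\right)^2\leq\left(\int_{X_\delta}\theta_\delta^{n-1}\wedge\mu_\delta^*(\alpha+\delta\omega)\right)\left(\vol(\alpha+\delta\omega)-\int_{X_\delta}\theta_\delta^n\right)
\]
is \emph{not} known to be valid on general compact K\"ahler manifolds. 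This is exactly the quantitative orthogonality estimate of \cite[Theorem 4.1]{BDPP}, whose proof requires $X$ projective and $\alpha$ in the real N\'eron--Severi group, because it rests on the algebraic Morse inequality $\vol(A-B)\geq\int_X A^n-n\int_X A^{n-1}\wedge B$ for nef classes $A,B$, i.e.\ on \eqref{goal}---precisely the statement that remains conjectural in the transcendental setting. Invoking it here begs the question; indeed the final Remark of Section \ref{sectrem} points out that this quantitative version does not even follow from the arguments that prove Theorem \ref{zero}.

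The paper's proof circumvents this by using the weaker but unconditional inequality \eqref{goal2} of \cite{To,Po2}, in which the Morse-type difference appears raised to the power $n$ and divided by $\left(\int_{X_\delta}A^n\right)^{n-1}$. Running the perturbation argument of \cite{BDPP} (write $\theta_\delta+tE_\delta=A-B$ with $A,B$ nef and optimize over small $t$) through \eqref{goal2} gives
\[
\vol(\alpha+\delta\omega)\geq\frac{\left(\int_{X_\delta}\theta_\delta^n+c\left(\int_{X_\delta}\theta_\delta^{n-1}\wedge[E_\delta]\right)^2\right)^n}{\left(\int_{X_\delta}A^n\right)^{n-1}}
\]
for a uniform constant $c>0$; since the denominator stays bounded, the hypothesis $\vol(\alpha)=0$ forces the numerator, hence $\int_{X_\delta}\theta_\delta^{n-1}\wedge[E_\delta]$, to zero. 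So the vanishing of the volume is used not merely to kill the defect factor, as in your argument, but to absorb the loss inherent in \eqref{goal2}; without that hypothesis \eqref{goal2} is too weak, and the Cauchy--Schwarz-type estimate you want is unavailable. To repair your write-up you would need either to prove the quantitative orthogonality transcendentally (which is open) or to switch to \eqref{goal2} as the paper does.
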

The strategy of proof is similar to the one in \cite{BDPP} with one crucial difference. Since the weak transcendental Morse inequality
\begin{equation}\label{goal}
\vol(\alpha-\beta)\geq \int_X\alpha^n-n\int_X\alpha^{n-1}\wedge\beta,
\end{equation}
for the difference of two nef classes $\alpha,\beta$ remains conjectural, we employ instead the weaker version
\begin{equation}\label{goal2}
\vol(\alpha-\beta)\geq \frac{\left(\int_X\alpha^n-n\int_X\alpha^{n-1}\wedge\beta\right)^n}{\left(\int_X\alpha^n\right)^{n-1}},
\end{equation}
which was proved independently in \cite{To,Po2}, using the mass concentration technique of Demailly-P\u{a}un \cite{DP} and its recent improvements by Chiose \cite{Ch}, Xiao \cite{Xi} and Popovici \cite{Po}. Inequality \eqref{goal2} is too weak to prove Conjecture \ref{orto} in general, but the fact that the numerator on the RHS has the correct form turns out to be enough to prove Theorem \ref{zero}.

If we define the ``difference function''
$\mathcal{D}:\mathcal{E}\to\mathbb{R}$ by 
\begin{equation}\label{d}
\mathcal{D}(\alpha):=\langle\alpha^{n-1}\rangle\cdot\alpha-\vol(\alpha),
\end{equation}
then Conjecture \ref{orto} simply states that $\mathcal{D}$ vanishes identically on $\mathcal{E}$. As a corollary of Theorem \ref{zero} we have:

\begin{corollary}\label{cor}
The function $\mathcal{D}:\mathcal{E}\to\mathbb{R}$ is nonnegative, continuous on $\mathcal{E}$, and vanishes on its boundary. 
\end{corollary}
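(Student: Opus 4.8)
The plan is to handle the three assertions in turn, reducing the only nontrivial one, continuity up to $\partial\mathcal{E}$, to an upper semicontinuity property of the positive product. For nonnegativity I would first turn the reformulation in the introduction into an identity: with the approximate Zariski decomposition $\mu_\delta^*(\alpha+\delta\omega)=\theta_\delta+[E_\delta]$ of Section \ref{sectmain}, one has $\vol(\alpha)=\lim_{\delta\downarrow 0}\int_{X_\delta}\theta_\delta^n$ and $\langle\alpha^{n-1}\rangle\cdot\alpha=\lim_{\delta\downarrow 0}\int_{X_\delta}\theta_\delta^{n-1}\wedge\mu_\delta^*\alpha$. Writing $\mu_\delta^*\alpha=\theta_\delta+[E_\delta]-\delta\mu_\delta^*\omega$ and using that $\int_{X_\delta}\theta_\delta^{n-1}\wedge\mu_\delta^*\omega\le(\alpha+\delta\omega)^{n-1}\cdot\omega$ stays bounded, the $\delta$-term drops out in the limit and
$$\mathcal{D}(\alpha)=\lim_{\delta\downarrow 0}\int_{X_\delta}\theta_\delta^{n-1}\wedge[E_\delta]\geq 0,$$
since $\theta_\delta$ is nef and $[E_\delta]$ is effective. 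For vanishing on the boundary I would use that the interior of $\mathcal{E}$ is the big cone $\{\vol>0\}$, so every $\alpha\in\partial\mathcal{E}$ has $\vol(\alpha)=0$, whence $\mathcal{D}(\alpha)=0$ by Theorem \ref{zero}.

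For continuity, on the big cone $\mathcal{E}^\circ$ both $\vol$ and $\langle\alpha^{n-1}\rangle$ are continuous \cite{Bo,BoT,BFJ}, so $\mathcal{D}=\langle\alpha^{n-1}\rangle\cdot\alpha-\vol(\alpha)$ is continuous there. At a boundary point $\alpha_0$ we have $\mathcal{D}(\alpha_0)=0$, and since $\mathcal{D}\geq 0$ and $\vol$ is continuous on all of $\mathcal{E}$ with $\vol(\alpha_0)=0$, it is enough to prove that $G(\alpha):=\langle\alpha^{n-1}\rangle\cdot\alpha$ is upper semicontinuous on $\mathcal{E}$. Fixing a Kähler class $\omega$ and setting $G_\ve(\alpha):=\langle(\alpha+\ve\omega)^{n-1}\rangle\cdot(\alpha+\ve\omega)$, each $G_\ve$ is continuous on $\mathcal{E}$ because $\alpha+\ve\omega$ lies in the big cone, where positive products are continuous; monotonicity of the positive product shows $G_\ve$ is nondecreasing in $\ve$; and $\langle(\alpha+\ve\omega)^{n-1}\rangle\downarrow\langle\alpha^{n-1}\rangle$ as $\ve\downarrow 0$, together with continuity of the intersection pairing, gives $G_\ve(\alpha)\to G(\alpha)$. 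Thus $G=\inf_{\ve>0}G_\ve$ is a decreasing limit of continuous functions, hence upper semicontinuous, and for any sequence $\alpha_j\to\alpha_0$ in $\mathcal{E}$,
$$0\leq\liminf_j\mathcal{D}(\alpha_j)\leq\limsup_j\mathcal{D}(\alpha_j)\leq G(\alpha_0)-\vol(\alpha_0)=0,$$
so $\mathcal{D}(\alpha_j)\to\mathcal{D}(\alpha_0)$ and $\mathcal{D}$ is continuous on all of $\mathcal{E}$.

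The main obstacle is the upper semicontinuity of $G$, that is, the representation $G=\inf_\ve G_\ve$: one must invoke Boucksom's monotone convergence and continuity theorems for positive products to guarantee both that $G_\ve$ is genuinely continuous up to $\partial\mathcal{E}$ (which holds because $\alpha+\ve\omega$ never leaves the big cone) and that the cross term $\ve\,\langle(\alpha+\ve\omega)^{n-1}\rangle\cdot\omega$ vanishes in the limit, so that the limit of the $G_\ve$ is really $G$. Once these standard facts are in place, the squeeze between nonnegativity and upper semicontinuity does the rest.
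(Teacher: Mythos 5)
Your proposal is correct and follows essentially the same route as the paper: nonnegativity via the approximate Zariski decomposition identity $\mathcal{D}(\alpha)=\lim_{\delta\downarrow 0}\int_{X_\delta}\theta_\delta^{n-1}\wedge[E_\delta]$, vanishing on $\partial\mathcal{E}$ from Theorem \ref{zero} since boundary classes have volume zero, and continuity by combining continuity of $\vol$ on $\mathcal{E}$ with upper semicontinuity of $\langle\alpha^{n-1}\rangle\cdot\alpha$ (which the paper asserts as following easily from the definitions and you spell out via the decreasing family $G_\ve=\inf$-representation). The only difference is that you make explicit the semicontinuity argument the paper leaves implicit.
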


Theorem \ref{zero} and Corollary \ref{cor} are proved in section \ref{sectmain}. In section \ref{sectrem} we will make some further remarks on the function $\mathcal{D}$, and on the relation between Theorem \ref{zero} and the ``cone duality'' conjecture.

\section{The main theorem}\label{sectmain}
In this section we give the proof of Theorem \ref{zero} and Corollary \ref{cor}.

\begin{proof}[Proof of Theorem \ref{zero}] Let $\alpha$ be any pseudoeffective $(1,1)$ class on $X$.
By definition of moving intersection products, for any $1\leq p\leq n$ the real $(p,p)$ cohomology class $\langle\alpha^{p}\rangle$ is defined to be
$$\langle\alpha^{p}\rangle=\lim_{\delta\downarrow 0}\langle (\alpha+\delta\omega)^{p}\rangle,$$
where for $\delta>0$ the class $\alpha+\delta\omega$ is big, and in this case we define
$$\langle(\alpha+\delta\omega)^{p}\rangle=[\langle T_{min,\delta}^{p}\rangle],$$
where $T_{min,\delta}$ is any positive current with minimal singularities in the class $\alpha+\delta\omega$, and $\langle T_{min,\delta}^{p}\rangle$ denotes the non-pluripolar product \cite{BEGZ}.

The volume of $\alpha$ (see \cite{Bo}) is in fact equal to the moving self-intersection product 
$$\vol(\alpha)=\langle\alpha^n\rangle=\lim_{\delta\downarrow 0}\langle(\alpha+\delta\omega)^n\rangle.$$

We now review the well-known construction of approximate Zariski decompositions \cite{BoT,BDPP, BEGZ,BFJ}, following roughly the argument in \cite[Proposition 1.18]{BEGZ}.
Applying Demailly's regularization \cite{De} to $T_{min,\frac{\delta}{2}}$ we obtain a sequence of currents $T_{\delta,\ve}, \ve>0,$ in the big class $\alpha+\frac{\delta}{2}\omega$, with analytic singularities, with $T_{\delta,\ve}\geq -\ve\omega$, and with their potentials decreasing to that of $T_{min,\frac{\delta}{2}}$ as $\ve\to 0$.
As long as $\ve\leq\frac{\delta}{2}$, we have that $T_{\delta,\ve}+\frac{\delta}{2}\omega$ and $T_{min,\frac{\delta}{2}}+\frac{\delta}{2}\omega$ are closed positive currents in the class $\alpha+\delta\omega$ whose potentials are locally bounded away from the proper analytic subvariety $A:=E_{nK}(\alpha+\frac{\delta}{2}\omega)$, and so by weak continuity of the Bedford-Taylor Monge-Amp\`ere operator along decreasing sequences we have
$$\left(T_{\delta,\ve}+\frac{\delta}{2}\omega\right)^{p}\to \left(T_{min,\frac{\delta}{2}}+\frac{\delta}{2}\omega\right)^{p},$$
weakly on $X\backslash A$ as $\ve\to 0$, for all $1\leq p\leq n$. It follows that
\[\begin{split}
\int_{X\backslash A}\left(T_{min,\frac{\delta}{2}}+\frac{\delta}{2}\omega\right)^{p}\wedge\omega^{n-p}&\leq\liminf_{\ve\to 0}\int_{X\backslash A}\left(T_{\delta,\ve}+\frac{\delta}{2}\omega\right)^{p}\wedge\omega^{n-p}\\
&\leq \int_{X\backslash A}(T_{min,\delta})^{p}\wedge\omega^{n-p},
\end{split}\]
where the last inequality follows from \cite[Theorem 1.16]{BEGZ}, since all the currents involved have small unbounded locus.
But as $\delta\to 0$ both the LHS and the RHS converge to
$$\int_X\langle \alpha^p\rangle\wedge\omega^{n-p},$$
and so we may choose a sequence $\ve(\delta)\to 0$ such that the currents $T_\delta:=T_{\delta,\ve(\delta)}-\frac{\delta}{2}\omega$ in the class $\alpha$ have analytic singularities,
satisfy $T_\delta\geq -\delta\omega$, and are such that
$$\lim_{\delta\downarrow 0}\langle (T_{\delta}+\delta\omega)^p\rangle=\langle\alpha^p\rangle,$$
for all $1\leq p\leq n$. 
Let $\mu_\delta:X_\delta\to X$ be a resolution of the singularities of $T_\delta+\delta\omega$, so that
$$\mu_\delta^*(T_\delta+\delta\omega)=\theta_\delta+[E_\delta],$$
so that $\theta_\delta$ is a smooth semipositive form, $E_\delta$ is an effective $\mathbb{R}$-divisor and $[E_\delta]$ denotes the current of integration. We refer to this construction as an approximate Zariski decomposition for the class $\alpha$.

As discussed above, we have
$$\vol(\alpha)=\lim_{\delta\downarrow 0}\int_X \langle (T_\delta+\delta\omega)^{n}\rangle=\lim_{\delta\downarrow 0}\int_{X_\delta}\langle (\mu_\delta^*(T_\delta+\delta\omega))^{n}\rangle=\lim_{\delta\downarrow 0}\int_{X_\delta}\theta_\delta^n,$$
\[\begin{split}
\langle\alpha^{n-1}\rangle\cdot \alpha&=\lim_{\delta\downarrow 0}\int_X \langle (T_\delta+\delta\omega)^{n-1}\rangle\wedge\alpha
=\lim_{\delta\downarrow 0}\int_X \langle (T_\delta+\delta\omega)^{n-1}\rangle\wedge(\alpha+\delta\omega)\\
&=\lim_{\delta\downarrow 0}\int_{X_\delta}\langle (\mu_\delta^*(T_\delta+\delta\omega))^{n-1}\rangle\wedge\mu_\delta^*(\alpha+\delta\omega)\\
&
=\lim_{\delta\downarrow 0}\int_{X_\delta}(\theta_\delta^n+\theta_\delta^{n-1}\wedge[E_\delta]),
\end{split}\]
so that the orthogonality relation \eqref{ort} is in general equivalent to the statement that
\begin{equation}\label{ort2}
\lim_{\delta\downarrow 0}\int_{X_\delta}\theta_\delta^{n-1}\wedge[E_\delta]=0.
\end{equation}
We now follow \cite{BDPP}, and fix a constant $C_0$ such that $C_0\omega\pm(\alpha+\delta\omega)$ is nef. We write
$$E_\delta=\mu_\delta^*(\alpha+\delta\omega+C_0\omega)-(\theta_\delta+C_0\mu_\delta^*\omega),$$
as the difference of two nef classes. For $t\in [0,1]$ write
$$\theta_\delta+tE_\delta=A-B,$$
where
$$A=\theta_\delta+t\mu_\delta^*(\alpha+\delta\omega+C_0\omega),$$
$$B=t(\theta_\delta+C_0\mu_\delta^*\omega),$$
and $A,B$ are nef. Then
$$\vol(\alpha+\delta\omega)=\vol(\theta_\delta+E_\delta)\geq \vol(\theta_\delta+tE_\delta)=\vol(A-B).$$
We use \cite[Theorem 1.1]{To} (also independently obtained in \cite[Theorem 3.5]{Po2}) and obtain
\begin{equation}\label{key}
\vol(A-B)\geq \frac{\left(\int_{X_\delta} A^n -n\int_{X_\delta} A^{n-1}\wedge B\right)^n}{\left(\int_{X_\delta}A^n\right)^{n-1}}.
\end{equation}
We follow the same argument as in \cite[p.218]{BDPP} (see also \cite[(11.15)]{Laz}) and estimate
$$\int_{X_\delta} A^n -n\int_{X_\delta} A^{n-1}\wedge B\geq \int_{X_\delta}\theta_\delta^n+nt\int_{X_\delta}\theta_\delta^{n-1}\wedge [E_\delta] - 5n^2t^2C_0^n\int_X\omega^n,$$
as long as $t\leq\frac{1}{10n}$. We choose
$$t=\frac{\int_{X_\delta}\theta_\delta^{n-1}\wedge [E_\delta]}{10n C_0^n\int_X\omega^n},$$
which is easily seen to be less than $\frac{1}{10n}$, and 
so we obtain
$$\int_{X_\delta} A^n -n\int_{X_\delta} A^{n-1}\wedge B\geq \int_{X_\delta}\theta_\delta^n+\frac{1}{20}\frac{\left(\int_{X_\delta}\theta_\delta^{n-1}\wedge [E_\delta]\right)^2}{C_0^n\int_X\omega^n},$$
and plugging this into \eqref{key} we obtain
$$\vol(\alpha+\delta\omega)\geq \frac{\left(\int_{X_\delta}\theta_\delta^n+\frac{1}{20}\frac{\left(\int_{X_\delta}\theta_\delta^{n-1}\wedge [E_\delta]\right)^2}{C_0^n\int_X\omega^n}\right)^n}{\left(\int_{X_\delta}A^n\right)^{n-1}}.$$
We also have
\begin{equation}\label{bad}
\int_{X_\delta}A^n=\int_{X_\delta}(\theta_\delta+t\mu_\delta^*(\alpha+\delta\omega+C_0\omega))^n\leq \int_{X_\delta}\theta_\delta^n + Ct\leq C,
\end{equation}
and so
$$\int_{X_\delta}\theta_\delta^n+\frac{1}{20}\frac{\left(\int_{X_\delta}\theta_\delta^{n-1}\wedge [E_\delta]\right)^2}{C_0^n\int_X\omega^n}\leq C\vol(\alpha+\delta\omega)^{\frac{1}{n}}\to C\vol(\alpha)^{\frac{1}{n}}=0,$$
as $\delta\to 0$, and we conclude that
$$\int_{X_\delta}\theta_\delta^{n-1}\wedge [E_\delta]\to 0,$$
which proves \eqref{ort2}.
\end{proof}

\begin{remark}
Some of the estimates in this proof, such as for example \eqref{bad}, are far from being sharp. It is easy to make them sharp, but this does not appear to give any useful improvement.
\end{remark}

\begin{proof}[Proof of Corollary \ref{cor}]
Using the notation as in the proof of Theorem \ref{zero}, we have that
$$\mathcal{D}(\alpha)=\lim_{\delta\downarrow 0}\int_{X_\delta}\theta_\delta^{n-1}\wedge[E_\delta]\geq 0.$$ 
It follows easily from the definitions that moving intersection products are upper-semicontinuous on the pseudoeffective cone and continuous in its interior (the big cone), while it was proved in \cite{Bo} that the volume function is continuous on the whole pseudoeffective cone. It follows that $\mathcal{D}$ is continuous on the big cone and upper-semicontinuous on the pseudoeffective cone. By Theorem \ref{zero}, $\mathcal{D}$ vanishes on its boundary, and hence it is continuous on all of $\mathcal{E}$.
\end{proof}

\section{Further Remarks}\label{sectrem}
In this section we collect some further remarks on the function $\mathcal{D}$ and on the cone duality conjecture of \cite{BDPP}.

The function $\mathcal{D}$ defined in \eqref{d} clearly vanishes on the nef cone, where moving intersection products equal usual intersection products (see e.g. \cite{BEGZ}). More generally we have the following result, which is a simple consequence of the author's work with Collins \cite{CT} and was also observed by Deng \cite{Den}:

\begin{proposition}
Let $\alpha$ be a pseudoeffective $(1,1)$ class, and write $\alpha=P+N$ for its divisorial Zariski decomposition. If the class $P$ is nef then $\mathcal{D}(\alpha)=0$.
\end{proposition}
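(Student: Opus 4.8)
The plan is to reduce $\mathcal{D}(\alpha)$ to a single intersection number involving the positive part $P$ and the negative part $N$, and then to appeal to the null-locus results of \cite{CT}. First I would record the two standard properties of the divisorial Zariski decomposition \cite{Bo,BEGZ}: a positive current with minimal singularities in $\alpha$ differs from one in $P$ only by the current of integration $[N]$, which is pluripolar and hence invisible to non-pluripolar products. This gives $\langle\alpha^{n-1}\rangle=\langle P^{n-1}\rangle$ and $\vol(\alpha)=\langle\alpha^n\rangle=\langle P^n\rangle=\vol(P)$.

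Next I would use the hypothesis that $P$ is nef. For nef classes the moving intersection products coincide with the ordinary ones, so $\langle P^{n-1}\rangle=P^{n-1}$ and $\vol(P)=\int_X P^n$. If this integral vanishes then $\vol(\alpha)=0$ and Theorem \ref{zero} applies directly to give $\mathcal{D}(\alpha)=0$, so I may assume $P$ is nef and big. Writing $\alpha=P+N$ with $N=\sum_j\nu_j D_j$ and $\nu_j>0$, the computation collapses to
$$\mathcal{D}(\alpha)=\langle\alpha^{n-1}\rangle\cdot\alpha-\vol(\alpha)=P^{n-1}\cdot(P+N)-\int_X P^n=P^{n-1}\cdot N=\sum_j\nu_j\int_{D_j}P^{n-1}.$$
Since $P$ is nef each term $\int_{D_j}P^{n-1}$ is nonnegative, so it suffices to show that every one of them vanishes.

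The heart of the argument, and the place where \cite{CT} enters, is the claim that each prime divisor $D_j$ in the support of $N$ is contained in $\mathrm{Null}(P)=\bigcup_{\int_V P^{\dim V}=0}V$. One inclusion is elementary: because $\nu(\alpha,D_j)=\nu_j>0$, every K\"ahler current in the class $\alpha$ is singular along $D_j$, so $D_j\subseteq E_{nK}(\alpha)$. The main obstacle is to transfer this information from $\alpha$ to $P$ and to identify the relevant locus geometrically; here I would invoke the theorem of \cite{CT} (also used by \cite{Den}) that for a nef and big class $E_{nK}(P)=\mathrm{Null}(P)$, together with the fact that a minimal singularities current of $P$ is smooth off $\mathrm{Null}(P)$, to conclude that the components of $N$ cannot escape $\mathrm{Null}(P)$. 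Once $D_j\subseteq\mathrm{Null}(P)$, the dimension constraint forces $D_j$ to be a component of $\mathrm{Null}(P)$ of dimension $n-1$, so $\int_{D_j}P^{n-1}=0$ by definition of the null locus; summing then gives $\mathcal{D}(\alpha)=0$.
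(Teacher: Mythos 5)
Your proposal is correct and follows essentially the same route as the paper: reduce to $P^{n-1}\cdot N=0$ using that moving products are unchanged when passing to the positive part and that they coincide with ordinary products for nef classes, then locate the components of $N$ inside $E_{nK}(\alpha)=E_{nK}(P)=\mathrm{Null}(P)$ and apply the main theorem of \cite{CT}. The only notable differences are cosmetic: the paper justifies the identification $E_{nK}(\alpha)=E_{nK}(P)$ by citing \cite[Claim 4.7]{Ma} where you gesture at it more loosely, while your explicit reduction of the non-big case to Theorem \ref{zero} is a small point of care that the paper leaves implicit.
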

Recall here that the positive part is given by $P=\langle \alpha\rangle$, which is in general only nef in codimension $1$ \cite{Bo2, Na}.
\begin{proof}
First, we show that 
$$\langle \alpha^{n-1}\rangle\cdot N=0.$$
Since moving products are unchanged if we replace a class by its positive part (see \cite{BDPP, BFJ}), this is equivalent to showing that
$$\langle P^{n-1}\rangle\cdot N=0,$$
and since by assumption $P$ is nef, this is also equivalent to showing that
$$P^{n-1}\cdot N=0.$$
But by definition the irreducible components of $N$ are contained in $E_{nK}(\alpha)=E_{nK}(P)$ (see e.g. \cite[Claim 4.7]{Ma}), and so are also irreducible components of $E_{nK}(P)$. By the main theorem of \cite{CT} we have therefore $P^{n-1}\cdot N=0$. On the other hand since $P$ is nef we also have
$$\langle \alpha^{n-1}\rangle\cdot\alpha=\langle \alpha^{n-1}\rangle\cdot P=\langle P^{n-1}\rangle\cdot P=P^n=\vol(P)=\vol(\alpha),$$
as claimed.
\end{proof}
\begin{remark}
In fact, it is not hard to see (cf. \cite{BDPP}) that in general \eqref{ort} is equivalent to the following two relations both holding
\begin{equation}\label{orto1}
\langle\alpha^{n-1}\rangle\cdot P=\vol(\alpha),
\end{equation}
\begin{equation}\label{orto2}
\langle\alpha^{n-1}\rangle\cdot N=0.
\end{equation}
\end{remark}

It was proved in \cite{BDPP} that Conjecture \ref{orto} implies (and is in fact equivalent to) the ``cone duality'' conjecture, which states that the dual cone of the pseudoeffective cone $\mathcal{E}$ of a compact K\"ahler manifold equals the movable cone $\mathcal{M}\subset H^{n-1,n-1}(X,\mathbb{R})$. It is easy to see that $\mathcal{E}\subset\mathcal{M}^\vee$, so the point is to prove the reverse inclusion. The proof given there starts by assuming that there is a class $\alpha\in \de\mathcal{E}\cap(\mathcal{M}^\vee)^\circ$, and derives a contradiction, assuming that $X$ is projective and that the class belongs to the real N\'eron-Severi group.

In general we have the following:
\begin{proposition}
Let $\alpha\in \de\mathcal{E}\cap(\mathcal{M}^\vee)^\circ$. Then we must have $\langle\alpha^{n-1}\rangle=0$.
\end{proposition}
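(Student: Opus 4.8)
The plan is to combine Theorem \ref{zero} with the elementary duality theory of convex cones. First I would observe that since $\alpha$ lies on the boundary $\de\mathcal{E}$ of the pseudoeffective cone, and since the big cone is precisely the interior of $\mathcal{E}$ (a class being big exactly when its volume is positive), we must have $\vol(\alpha)=0$. Theorem \ref{zero} then applies directly and gives the orthogonality relation
$$\langle\alpha^{n-1}\rangle\cdot\alpha=\vol(\alpha)=0.$$

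Next I would recall that the moving intersection product $\langle\alpha^{n-1}\rangle$ always lies in the movable cone $\mathcal{M}\subset H^{n-1,n-1}(X,\mathbb{R})$. Indeed, for $\delta>0$ the class $\alpha+\delta\omega$ is big and $\langle(\alpha+\delta\omega)^{n-1}\rangle$ is movable by construction, and since $\mathcal{M}$ is closed and $\langle\alpha^{n-1}\rangle=\lim_{\delta\downarrow 0}\langle(\alpha+\delta\omega)^{n-1}\rangle$, the limit remains movable.

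The final step uses the hypothesis $\alpha\in(\mathcal{M}^\vee)^\circ$. Since $\mathcal{M}$ is a full-dimensional salient closed convex cone and the intersection pairing between $H^{1,1}(X,\mathbb{R})$ and $H^{n-1,n-1}(X,\mathbb{R})$ is nondegenerate, a class lies in the interior of the dual cone $\mathcal{M}^\vee$ if and only if it pairs strictly positively with every nonzero element of $\mathcal{M}$. Applying this to $\alpha$ together with the movable class $\langle\alpha^{n-1}\rangle$: if $\langle\alpha^{n-1}\rangle$ were nonzero, we would get $\alpha\cdot\langle\alpha^{n-1}\rangle>0$, contradicting the vanishing established above. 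Hence $\langle\alpha^{n-1}\rangle=0$.

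I expect the points requiring the most care to be the second and third steps, namely verifying that $\langle\alpha^{n-1}\rangle$ genuinely belongs to $\mathcal{M}$ even when $\alpha$ lies only on $\de\mathcal{E}$ (so that the product is defined as a limit), and pinning down the precise characterization of the interior of the dual cone. Both are standard facts about movable classes and convex duality, but the strict-positivity characterization of $(\mathcal{M}^\vee)^\circ$ is exactly what drives the argument and should be invoked carefully.
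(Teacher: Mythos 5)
Your proof is correct and follows essentially the same route as the paper: both arguments rest on Theorem \ref{zero} together with the movability of the positive product and the strict positivity of a class in $(\mathcal{M}^\vee)^\circ$ against nonzero elements of $\mathcal{M}$. The only difference is packaging: the paper pairs $\alpha-\ve\omega$ against the approximating movable classes $(\mu_\delta)_*(\theta_\delta^{n-1})$ (after perturbing $\theta_\delta$ to be K\"ahler upstairs) and contradicts \eqref{ort2}, whereas you pass to the limit class $\langle\alpha^{n-1}\rangle\in\mathcal{M}$ first and contradict \eqref{ort} directly.
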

\begin{proof}
Assume for a contradiction that $\langle\alpha^{n-1}\rangle\neq 0$ in $H^{n-1,n-1}(X,\mathbb{R})$. Since this class is represented by a closed nonnegative $(n-1,n-1)$ current, it follows that
$$\int_X\langle\alpha^{n-1}\rangle\wedge\omega>0,$$
where $\omega$ is a fixed K\"ahler metric.
We choose an approximate Zariski decomposition $T_\delta$ with resolutions $\mu_\delta:X_\delta\to X$ as before, so that we have also
$$0<2\eta\leq\int_X\langle\alpha^{n-1}\rangle\wedge\omega=\lim_{\delta\downarrow 0} \int_{X_\delta}\theta_\delta^{n-1}\wedge \mu_\delta^*\omega=\lim_{\delta\downarrow 0} \int_X (\mu_\delta)_*(\theta_\delta^{n-1})\wedge\omega,$$
for some fixed $\eta>0$. Up to modifying the classes $\theta_\delta$ (and $[E_\delta]$) by subtracting a small multiple of $\mathrm{Exc}(\mu_\delta)$, we may also assume that the classes $\theta_\delta$ are K\"ahler on $X_\delta$, so that the pushforwards $(\mu_\delta)_*(\theta_\delta^{n-1})$ are movable classes on $X$.


Let $\ve>0$ be such that $\alpha-\ve\omega\in\mathcal{M}^\vee$. Integrating $\alpha-\ve \omega$ against the class $(\mu_\delta)_*(\theta_\delta^{n-1})\in\mathcal{M}$ we obtain
$$\int_X\alpha\wedge (\mu_\delta)_*(\theta_\delta^{n-1})\geq\ve\int_X\omega\wedge (\mu_\delta)_*(\theta_\delta^{n-1})\geq \ve\eta>0,$$
for all $\delta>0$ small. But we also have
$$\int_X\alpha\wedge (\mu_\delta)_*(\theta_\delta^{n-1})=\int_{X_\delta}\mu_\delta^*\alpha\wedge \theta_\delta^{n-1}\leq\int_{X_\delta}
\mu_\delta^*(\alpha+\delta\omega)\wedge \theta_\delta^{n-1}=\int_{X_\delta}(\theta_\delta^n+\theta_\delta^{n-1}\wedge[E_\delta]),$$
and putting these together we have
$$\int_{X_\delta}(\theta_\delta^n+\theta_\delta^{n-1}\wedge[E_\delta])\geq \ve\eta,$$
for all $\delta>0$ small. Since $\vol(\alpha)=0$, this implies
$$\int_{X_\delta}\theta_\delta^{n-1}\wedge[E_\delta]\geq \frac{\ve\eta}{2},$$
for all $\delta>0$ small, which is a contradiction to Theorem \ref{zero}. Therefore we must have that $\langle\alpha^{n-1}\rangle=0$.
\end{proof}
\begin{remark}
Even though the class $\alpha\in \de\mathcal{E}\cap(\mathcal{M}^\vee)^\circ$ does satisfy the orthogonality conjecture (say as in \eqref{ort2}) by Theorem \ref{zero}, because it has volume zero, this is not enough to derive a contradiction in general. Indeed, to make the argument in \cite{BDPP} go through, one would need the following quantitative version of orthogonality
$$\left(\int_{X_\delta}\theta_\delta^{n-1}\wedge[E_\delta]\right)^2\leq C\left(\vol(\alpha+\delta\omega)-\int_{X_\delta}\theta_\delta^n\right),$$
(cf. \cite[Theorem 4.1]{BDPP}) which does not follow from the arguments of Theorem \ref{zero}.
\end{remark}

\end{document}